\documentclass[11pt]{amsart}
\usepackage[paper,pkg/amsaddr=true]{zbs}
\addbibresource{permlo.bib}
\title{A Hal\'asz-type theorem for permutation anticoncentration}
\author[Zach Hunter]{Zach Hunter\nfts{1}}\address{\nfts{1}Department of Mathematics, ETH Zurich, Zurich, Switzerland}
\email{zach.hunter@math.ethz.ch}
\author[Cosmin Pohoata]{Cosmin Pohoata\nfts{2}}\address{\nfts{2}Department of Mathematics, Emory University, Atlanta, USA}
\email{cosmin.pohoata@emory.edu}
\author[Daniel G.\ Zhu]{Daniel G.\ Zhu\nfts{3}}\address{\nfts{3}Department of Mathematics, Princeton University, Princeton, USA}
\email{zhd@princeton.edu}
\begin{document}
\begin{abstract}
Given a set $A=\{a_1,\ldots,a_n\}$ of real numbers and real coefficients $b_1,\ldots,b_n$, consider the distribution of the sum obtained by pairing the $a_i$'s with the $b_i$'s according to a uniformly random permutation.  A recent theorem of Pawlowski shows that as soon as the coefficients are not all equal, this distribution is always spread out at scale $n^{-1}$: no single value can occur with probability larger than $\frac{1}{2\lceil n/2\rceil + 1}$, and this bound is sharp in general.

We show that stronger anticoncentration holds when the coefficients have additional diversity.  We quantify the structure of the coefficient multiset by a simple statistic depending on its multiplicity profile, and prove that the maximum point mass of the permuted sum decays polynomially faster as this statistic grows. In particular, when the coefficients are all distinct we obtain a bound of $n^{-5/2+o(1)}$, which can be regarded as an analogue of a classical theorem of Erd\H{o}s and Moser.
\end{abstract}
\maketitle

\section{Introduction} \label{sec:intro}

Let $A = \set{a_1,\ldots,a_n}$ is a set of $n$ real numbers and let $S$ be the set of points $(a_{\pi(1)}, \ldots, a_{\pi(n)})$, where $\pi$ ranges over all permutations of $[n] = \set{1,2,\ldots,n}$. What is the maximum number of points in $S$ that lie in a hyperplane? As stated, this question is trivial, as every point in $S$ lies in
\[H_0 = \setmid*{(x_1,\ldots,x_n) \in \setr^n}{x_1 + \cdots + x_n = \sum_{a \in A} a}.\]
If we exclude $H_0$, this question was recently answered by Pawlowski \cite{Pawlo24}, who showed that the maximum number of points in $S$ lying on a non-$H_0$ hyperplane is $(n-1)!$ if $n\geq 3$ is odd and $n\cdot (n-2)!$ if $n \geq 4$ is even. In the case $A = [n]$, this result can be interpreted as bounding the maximum number of vertices of a permutohedron that can lie on a hyperplane, answering a question of Heged\"us and K\'arolyi \cite{HK24}. Recently, Kong and Zeng \cite{KZ25} proved a similar result over the complex numbers, which in turn extends to all characteristic-zero fields.

A close examination of \cite{Pawlo24} shows that for $n \geq 8$, the only hyperplanes achieving equality in the above bound are of the form $b_1x_1 + \cdots + b_nx_n = c$ where at least $n-2$ of the $b_i$ are the same. Combined with the fact that $H_0$ contains all of $S$, this suggests that as the $b_i$ become more ``diverse'', it should be possible to prove stronger upper bounds on the number of points in $S$ lying on the hyperplane. In this note, we prove a result of this form that is optimal up to a factor of $n^{o(1)}$.

For the remainder of this note, we will use the language of anticoncentration, where for a random variable $X$ we let $Q[X] = \sup_x \setp[X = x]$. The result of Pawlowski \cite{Pawlo24} can be written as follows:
\begin{thm}[Pawlowski] \label{thm:paw}
Let $A = \set{a_1,\ldots,a_n}$ be a set of $n \geq 3$ real numbers and let $b_1,\ldots,b_n$ be real coefficients, not all the same. If $\pi$ is a uniformly random permutation of $[n]$, then
\[Q\brac*{\sum_{i=1}^n a_i b_{\pi(i)}} \leq \frac{1}{2\ceil{n/2} + 1}.\]
\end{thm}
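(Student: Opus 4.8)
The plan is to convert the permutation average into an anticoncentration statement about ordered set partitions, and then to attack that statement by a ``rotation'' device together with an induction that strips off one coordinate at a time. \textbf{Step 1 (reduction).} Let $\beta_1<\cdots<\beta_m$ be the distinct values among $b_1,\ldots,b_n$, let $C_\ell=\{j:b_j=\beta_\ell\}$, put $m_\ell=|C_\ell|$ (so the hypothesis that the $b_i$ are not all equal is exactly $m\ge 2$), and for a permutation $\pi$ write $P_\ell=\pi^{-1}(C_\ell)$ and $\sigma(P)=\sum_{i\in P}a_i$. Then $\sum_{i=1}^n a_i b_{\pi(i)}=\sum_{\ell=1}^m\beta_\ell\,\sigma(P_\ell)$ depends only on the ordered set partition $(P_1,\ldots,P_m)$ of $[n]$, and every ordered set partition with part sizes $(m_1,\ldots,m_m)$ arises from exactly $\prod_\ell m_\ell!$ permutations. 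Hence $Q\big[\sum_i a_ib_{\pi(i)}\big]$ is $\binom{n}{m_1,\ldots,m_m}^{-1}$ times the largest number of such partitions on which $\sum_\ell\beta_\ell\sigma(P_\ell)$ is constant, and it suffices to bound that number by $\binom{n}{m_1,\ldots,m_m}/(2\ceil{n/2}+1)$.

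\textbf{Step 2 (the swap invariant).} The mechanism driving everything is that moving a single element $i$ from part $\ell$ to part $\ell'$ changes the weighted sum by exactly $(\beta_{\ell'}-\beta_\ell)a_i$, which is nonzero since the $a_i$ are pairwise distinct and the $\beta_\ell$ are pairwise distinct. Thus a level set of $\sum_\ell\beta_\ell\sigma(P_\ell)$ is an independent set in the ``element-transfer'' graph on ordered set partitions; when $m=2$ this graph is a Johnson graph and a level set becomes a family of $k$-element subsets of $\{a_1,\ldots,a_n\}$ with a common sum --- a classical Erd\H{o}s--Moser-type configuration whose extremal behaviour is governed by the arithmetic-progression case. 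The leverage is that a level set is no arbitrary independent set: it is carved out by a single linear equation, and this is what allows one to go below the naive independence bound.

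\textbf{Step 3 (rotations and parity).} Relabel so that $a_1<\cdots<a_n$ and $b_1\le\cdots\le b_n$; both relabellings are bijections of $S_n$ that preserve the count in question. Suppose first that $n$ is odd. The idea is to organise the ordered set partitions into cyclic orbits of the appropriate length --- obtained by fixing a well-chosen block of coordinates and cyclically permuting which coefficient occupies which coordinate of that block --- and then to use the monotonicity afforded by the rearrangement inequality to show that the linear form cannot repeat a value too often along such an orbit; summing the resulting ``few repetitions per orbit'' estimate over all orbits yields the claimed fraction. For even $n$ one conditions on which coefficient is paired with $a_n$: removing $a_n$ and its partner leaves an instance on $n-1$ values whose coefficient multiset is still non-constant in all but a handful of easily checked cases, and averaging the odd-$n$ bound over the $n$ choices closes the induction. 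A purely analytic alternative to Step 3 would be to reduce to integral $a_i,b_i$, write $Q$ as an integral over the circle of a permanent of a matrix of roots of unity, and bound that permanent --- a genuinely Hal\'asz-type computation --- but the combinatorial route keeps tighter hold of the exact constant.

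\textbf{Main obstacle.} Everything delicate is in Step 3. A careless rotation can send the sum back to its starting value (a ``resonance''), because the increments $(\beta_{\ell'}-\beta_\ell)a_i$ need not be in general position; one is therefore forced to choose the rotation --- and the block of coordinates it acts on --- adaptively from the arithmetic of $A$ and the multiplicity profile of the coefficients. Determining how long a resonance-free orbit one can always guarantee is precisely what produces the quantity $2\ceil{n/2}+1$, and it is also where the sharp examples come from; carrying out this bookkeeping, and dealing cleanly with the small exceptional cases in the parity step, is the real content of the proof.
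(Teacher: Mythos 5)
This is an outline, not a proof: everything that actually yields the bound is deferred to Step 3, and you say so yourself (``carrying out this bookkeeping \ldots is the real content of the proof''). Step 1 is a routine and correct reduction to ordered set partitions with fixed part sizes. Step 2, however, is slightly off: moving a single element from $P_\ell$ to $P_{\ell'}$ changes the part sizes, so it leaves the multiplicity class you fixed in Step 1; the relevant invariant is the \emph{swap} of $i\in P_\ell$ with $j\in P_{\ell'}$, which changes the sum by $(\beta_{\ell'}-\beta_\ell)(a_i-a_j)\neq 0$. (Also, $(\beta_{\ell'}-\beta_\ell)a_i$ can vanish when some $a_i=0$, which pairwise distinctness of the $a_i$ does not exclude.) The decisive content --- how the cyclic orbits are constructed, why a resonance-free orbit of the required length always exists, and the treatment of the exceptional cases in the even-$n$ induction (which are exactly where the near-extremal configurations with $n-2$ equal coefficients live) --- is entirely unspecified, so there is nothing to verify. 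Note also that the paper does not prove this statement internally; it is quoted from Pawlowski, so the only way to assess your plan is on its own terms, and on its own terms it stops before the theorem begins.

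A separate caution before you invest in Step 3: the constant as printed in the statement appears to carry a sign error, and you should not take it as your target. The extremal counts quoted in the introduction ($(n-1)!$ points for odd $n$ and $n\cdot(n-2)!$ for even $n$) correspond to $Q = 1/(2\lceil n/2\rceil - 1)$, i.e.\ $1/n$ for odd $n$ and $1/(n-1)$ for even $n$. Indeed, for $n=3$, $A=\{1,2,3\}$ and $b=(1,1,0)$ one computes $Q=1/3$, which exceeds the stated $1/5$. So the literal inequality with $2\lceil n/2\rceil+1$ is false, and any resonance analysis calibrated to produce that denominator is aiming at the wrong quantity; the orbit length you should be able to guarantee is $2\lceil n/2\rceil - 1$.
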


To state our result, for a multiset $B = \set{b_1,\ldots,b_n}$ we define its ``multiplicity profile'' $\mu(B) = (\mu_1,\ldots,\mu_\ell)$ to be the partition of $n$ such that the distinct elements in $B$ occur with multiplicities $\mu_1 \geq \cdots \geq \mu_\ell$ (so $\ell$ is the number of distinct elements in $B$). We additionally define $M(B) = \sum_{i=1}^\ell (i-1)^2 \mu_i$.

\begin{thm} \label{thm:main}
Let $A = \set{a_1,\ldots,a_n}$ and $B = \set{b_1,\ldots,b_n}$ be a set and a multiset, respectively, of real numbers such that $M(B) > 0$. If $\pi$ is a uniformly random permutation of $[n]$, then
\[Q\brac*{\sum_{i=1}^n a_i b_{\pi(i)}} \leq \frac{1}{n^{1-o(1)} \sqrt{M(B)}}.\]
\end{thm}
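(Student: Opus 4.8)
The plan is to run a Hal\'asz-type Fourier argument. After translating the $a_i$ and $b_i$ so that $\sum_i a_i = \sum_i b_i = 0$ --- which only shifts the distribution of $X := \sum_i a_i b_{\pi(i)}$ --- I would first reduce (by a routine approximation argument, at the cost of an $n^{o(1)}$ factor in $M(B)$) to the case where $A$ and $B$ are integer vectors with entries of size $n^{O(1)}$. Then $X$ is lattice-valued, so $Q[X] \le \int_0^1 |f(t)|\,dt$, where $f(t) = \mathbf{E}_\pi\!\left[e^{2\pi\mathrm{i}tX}\right] = \frac{1}{n!}\operatorname{per}\!\left(e^{2\pi\mathrm{i}ta_ib_j}\right)_{i,j}$, and everything comes down to an upper bound on this integral.

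The crux is that $f$ is a permanent, not a product. Write $v_1,\dots,v_\ell$ for the distinct values of $B$ with multiplicities $\mu_1 \ge \cdots \ge \mu_\ell$. I would compare $X$ with its ``sampling with replacement'' counterpart $\widetilde X = \sum_i a_i v_{c(i)}$, where $c(1),\dots,c(n)$ are i.i.d.\ and take the value $r$ with probability $\mu_r/n$; the two differ only through hypergeometric versus binomial sampling, and a coupling (or a direct estimate of the permanent) should give $Q[X] \le n^{o(1)} Q[\widetilde X]$, while now $\widetilde f(t) = \prod_i g_i(t)$ with $g_i(t) = \frac1n\sum_r \mu_r e^{2\pi\mathrm{i}ta_iv_r}$. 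Using the identity $1 - |g_i(t)|^2 = \frac{2}{n^2}\sum_{r<s}\mu_r\mu_s\!\left(1 - \cos 2\pi t a_i(v_r - v_s)\right)$ and $|g_i| \le e^{-(1-|g_i|^2)/2}$, one obtains
\[
  |f(t)| \le n^{o(1)}\exp\!\left(-\frac{1}{n^2}\sum_{r<s}\mu_r\mu_s\sum_{i=1}^{n}\sin^2\!\left(\pi t a_i(v_r - v_s)\right)\right),
\]
so the remaining task is the Hal\'asz-type integration of this bound over $[0,1]$. I would handle it by the standard circle-method dissection into arcs about rationals $a/q$ with $q \le n^{O(1)}$: the major arc about $0$ contributes, via a second-order expansion, $\operatorname{Var}(X)^{-1/2}$ with $\operatorname{Var}(X) = \frac{1}{n-1}\!\left(\sum_i a_i^2\right)\!\left(\sum_j b_j^2\right)$; the arc about $a/q$ contributes an analogous Gaussian with a $q$-restricted second moment replacing $\operatorname{Var}(X)$; and summing over $q$ costs only $n^{o(1)}$ by a divisor bound. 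Optimizing over the adversary's choices, the worst configuration is essentially an arithmetic progression, and there $\operatorname{Var}(X)$ and all its $q$-restricted variants are $\ge n^{2-o(1)}M(B)$ --- this is exactly where $M(B) = \sum_i (i-1)^2\mu_i$ is forced --- giving the claimed bound $\frac{1}{n^{1-o(1)}\sqrt{M(B)}}$.

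I expect the main obstacle to be that this Fourier argument is vacuous when $B$ is too close to constant, i.e.\ when $n - \mu_1 = O(1)$: then $\sum_{r<s}\mu_r\mu_s = O(n)$, the exponent above stays bounded, and in fact $Q[X] = \Theta(1/n)$ while $\int_0^1|f|$ can be as large as $n^{-1/2}$ (take $A$ a Sidon set and all but one of the $b_i$ equal), so the elementary inequality $Q \le \int|f|$ is simply not strong enough. But precisely in that regime $M(B) = O(1)$, so Pawlowski's Theorem~\ref{thm:paw} already gives $Q[X] \le 1/n \le n^{-1+o(1)}M(B)^{-1/2}$, and the two estimates have to be stitched together along the range of $M(B)$ (the Fourier side takes over as soon as $M(B) \to \infty$, since then $n - \mu_1 \ge M(B)^{1/3} \to \infty$). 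Beyond this, the remaining work is in making the two reductions (to integer coefficients, and to the with-replacement model) precise, and --- above all --- in the \emph{uniformity} of the Hal\'asz estimate over all adversarial $A$ and value sets $\{v_r\}$: one must show that no arithmetic conspiracy beats the arithmetic progression, and that it is $M(B)$, rather than the cruder weight $\sum_{r<s}\mu_r\mu_s$, that governs the small-denominator sums.
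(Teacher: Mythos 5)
Your proposal takes a genuinely different route from the paper, but it has a critical gap that you yourself flag as ``remaining work'' without indicating how to close it --- and that gap is the heart of the theorem. Your plan is a direct Fourier/circle-method attack: write the characteristic function as a permanent, compare to the product form coming from i.i.d.\ (``with replacement'') sampling of $B$-values, and integrate the resulting exponential bound by a major/minor arc dissection. The paper instead injects independent Bernoulli randomness \emph{inside} the permutation model: it writes $\pi = \pi''\sigma$ where $\sigma$ is a random product of the fixed transpositions $(2i-1\;2i)$, conditions on everything but the transposition indicators, and applies Hal\'asz's theorem (\cref{thm:halasz}) to the resulting genuine Bernoulli sum. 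This sidesteps the permanent and the hypergeometric-versus-binomial coupling entirely, and is one reason the paper's argument goes through cleanly.

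The essential missing ingredient in your approach is what makes the final estimate uniform over adversarial $A$. After the reduction you arrive at a sum of the shape $\sum_{r<s}\mu_r\mu_s\sum_i \sin^2(\pi t a_i(v_r-v_s))$ and must show that the measure of $t$ where this is small is $\lsim n^{-1+o(1)}M(B)^{-1/2}$, \emph{for every} choice of the $n$ distinct reals $a_i$ and the values $v_r$. Your claim that ``the worst configuration is essentially an arithmetic progression'' is not a routine optimization --- it is exactly the Erd\H{o}s--Moser-type statement being proved, and it encodes a non-trivial bound on the additive/multiplicative energy of the quantities $a_i(v_r-v_s)$. The paper handles this with \cref{lem:rnr}: the quantity $\kappa_{1,-1}(A,B)$ counting solutions to $(a_1-a_2)(b_1-b_2)=(a_3-a_4)(b_3-b_4)$ is bounded by $O\bigl((\log|A|+\log|B|)/(|A||B|)\bigr)$, the Minkowski distance energy estimate of Roche-Newton--Rudnev, which in turn rests on Guth--Katz. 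Without this (or some equivalent incidence-geometric input) there is no way to rule out an ``arithmetic conspiracy,'' and the circle-method arcs cannot be summed. Two smaller issues: the coupling $Q[X]\le n^{o(1)}Q[\widetilde X]$ between the permutation model and the with-replacement model is asserted but not obviously true in the needed uniformity (the paper carefully distinguishes $\kappa'_c$ with a distinctness constraint from $\kappa_c$ without, and only passes from the former to the latter by a one-sided inequality); and the reduction to integer coefficients of size $n^{O(1)}$ at an $n^{o(1)}$ cost in $M(B)$ is not routine for point-mass bounds and is unnecessary, since Hal\'asz's theorem and the Esseen inequality handle real coefficients directly. You do correctly observe that in the near-constant regime $M(B)=O(1)$ the Fourier bound is vacuous and Pawlowski's \cref{thm:paw} must be stitched in; this matches the paper's proof of \cref{thm:main}. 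But the main missing idea --- the energy/incidence bound that makes the estimate adversary-proof --- is precisely what the paper supplies via Roche-Newton--Rudnev, and your proposal does not have a substitute for it.
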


\cref{thm:main} can be interpreted as a permutation analogue of anticoncentration results of Erd\H{o}s-Moser, S\'ark\"ozy-Szemer\'edi, Stanley, and Hal\'asz. For context, we recall the classical problem of Littlewood and Offord \cite{LittlewoodOfford43}, which asked for a upper bound on $Q[X]$ where
$X=\sum_{i=1}^n a_i\xi_i$ with independent $\xi_i\in\{0,1\}$. In 1945, Erd\H{o}s proved the sharp universal bound $Q[X]\lsim n^{-1/2}$ \cite{Erdos45}. It was later proven by Erd\H{o}s and Moser \cite{ErdosMoser} that if the $a_i$ are pairwise distinct, this bound can be improved to $Q[X] \lsim n^{-3/2+o(1)}$. Answering a question of Erd\H{o}s and Moser \cite{ErdosMoser}, S\'ark\"ozy-Szemer\'edi \cite{sarkozy1965uber} and then also Stanley \cite{Stan80} further established the optimal bound $Q[X] \lsim n^{-3/2}$ in this case (with different levels of precision and substantially different proof methods). Hal\'asz \cite{Hal77} later
strengthened this theory by showing that $Q[X]$ is controlled by the number of short signed
additive relations among the coefficients (equivalently, by higher-order additive-energy
parameters).

\cref{thm:main} achieves an analogous result in the permutation model
$Y=\sum_{i=1}^n a_i b_{\pi(i)}$ (first introduced by S\"oze \cite{SozeII}, see also \cite{berger2026}). Instead of considering additive
energies, we quantify the
relevant structure of the coefficient multiset $B$ via its multiplicity profile $\mu(B)$ and
the statistic $M(B)$.  Our bound from \cref{thm:main} can be read as stating that the more ``diverse'' $B$ is (as measured by
$M(B)$), the smaller the point masses of the permutation sum must be.  In particular, in the
fully distinct regime one has $M(B)\asymp n^3$, yielding a bound\footnote{While we were preparing this manuscript, this corollary was also independently obtained by Do-Nguyen-Phan-Tran-Vu \cite{DNPTV25}, as well as several other interesting results about this permutation model.} of $n^{-5/2+o(1)}$. On the other hand, in the highly repetitive regimes the estimate is consistent with the
universal $n^{-1}$-type bound of Pawlowski \cite{Pawlo24}.  

We also prove a result where $A$ is allowed to have repeated elements.
\begin{thm} \label{thm:MAMB}
Let $A = \set{a_1,\ldots,a_n}$ and $B = \set{b_1,\ldots,b_n}$ be multisets of real numbers such that $M(A)M(B) \geq n^{3+\eps}$ for some $\eps > 0$. If $\pi$ is a uniformly random permutation of $[n]$, then
\[Q\brac*{\sum_{i=1}^n a_i b_{\pi(i)}} \lsim_\eps \frac{n^{1/2}(\log n)^2}{\sqrt{M(A)M(B)}}.\]
\end{thm}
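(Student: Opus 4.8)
The plan is to prove a Hal\'asz-type estimate through the characteristic function
\[
\phi(t)\;=\;\mathbb{E}_\pi\bigl[e^{\,it\sum_i a_i b_{\pi(i)}}\bigr]\;=\;\frac{1}{n!}\operatorname{per}\bigl(e^{\,it a_i b_j}\bigr)_{i,j}.
\]
By Wiener's formula recovering the atoms of a measure from the Ces\`aro average of its Fourier transform, one has $Q\bigl[\sum_i a_i b_{\pi(i)}\bigr]\le\liminf_{T\to\infty}\tfrac{1}{2T}\int_{-T}^{T}\lvert\phi(t)\rvert\,dt$, so it suffices to bound this average by $\tfrac{n^{1/2}(\log n)^2}{\sqrt{M(A)M(B)}}$. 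Both sides are scale invariant (replacing $A$ by $\lambda A$ changes nothing), and since $\sum_i a_i b_{\pi(i)}\overset{d}{=}\sum_j a_{\pi^{-1}(j)}b_j$ we may also assume $M(A)\le M(B)$. The argument follows the scheme behind \cref{thm:main}, the new feature being that one must keep quantitative track of the diversity of the multiset $A$ (through $M(A)$) rather than using that $A$ is a set.

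The core is an upper bound on $\lvert\phi(t)\rvert$ valid for most $t$. Writing $\lvert\phi(t)\rvert^2=\mathbb{E}_{\pi,\sigma}\,e^{\,it\sum_i a_i(b_{\pi(i)}-b_{\sigma(i)})}$ and composing the two permutations rewrites the squared characteristic function as a cluster (cycle) expansion over a uniform random permutation $\rho$. After discarding the long-cycle contributions — the first use of the hypothesis $M(A)M(B)\ge n^{3+\eps}$, which confines the relevant range to $\lvert t\rvert\lesssim n^{-O(1)}$ and makes the sampling-without-replacement corrections negligible — the leading term becomes (essentially) $\mathbb{E}_\rho\bigl[\prod_i\phi_B\bigl(t(a_i-a_{\rho(i)})\bigr)\bigr]$, where $\phi_B(s)=\tfrac1n\sum_j e^{isb_j}$ and $A$ has been sorted. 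Inserting the elementary bound $\lvert\phi_B(s)\rvert\le\exp\!\bigl(-\tfrac{c}{n}\sum_j\lVert sb_j/2\pi\rVert^2\bigr)$ into each factor, with $\lVert\cdot\rVert$ the distance to $\mathbb{Z}$, and using a concentration estimate for the resulting exponent as a function of $\rho$, one arrives at
\[
\lvert\phi(t)\rvert\;\lsim\;\exp\!\bigl(-\,c\,E(t)\bigr)\;+\;(\text{lower order}),
\qquad
E(t)\;\asymp\;\frac1n\sum_i\sum_j\mathbb{E}_\rho\bigl\lVert t\,(a_i-a_{\rho(i)})\,b_j/2\pi\bigr\rVert^2,
\]
in which the average over $\rho$ carries the diversity of $A$ and the sum over $j$ carries that of $B$, driven by independent randomness.

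It then remains to integrate $e^{-cE(t)}$ against $\tfrac{dt}{2T}$ and let $T\to\infty$. The density of the ``bad'' set of $t$ at which $E(t)=O(1)$ — i.e.\ at which many of the products $(a_i-a_{\rho(i)})b_j$ are close to $2\pi/t$ times an integer — is controlled by a dyadic decomposition of the $t$-range together with a level-set count of how many such near-collisions occur, a step standard in Hal\'asz's method and responsible for the two factors of $\log n$; the structural input is that a uniform permutation typically moves the value of multiplicity-rank $s$ to one whose rank differs by an amount of order $s$, so that the rank-weighted displacement statistic governing the count is exactly $M(A)=\sum_s(s-1)^2\mu_s$, and symmetrically $M(B)$. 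This yields a main-term contribution $\lsim\sqrt{n/(M(A)M(B))}$, and $M(A)M(B)\ge n^{3+\eps}$ forces the relevant diversity to exceed $\log n$, so the exponentially small ``lower order'' and degenerate contributions (one multiset consisting of a single dominant value together with a few others, so that $M$ is small although the number of distinct values may be large — handled by restricting to the support of the non-dominant values) are absorbed. The step I expect to be the main obstacle is the decoupling just invoked: since $\operatorname{per}(e^{\,ita_ib_j})$ genuinely entangles $A$ and $B$, a careless execution of the expansion produces an exponent with $\max\{M(A),M(B)\}$ in place of the product (hence only a bound like $\sqrt n/\sqrt{M(B)}$, which for distinct $A$ is merely $n^{-1}$), and recovering the full product $M(A)M(B)$ requires organising the cluster expansion so that the $A$-average over $\rho$ and the $B$-average over the $b$-values act on independent randomness and are combined only at the very end.
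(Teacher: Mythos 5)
Your proposal takes a genuinely different route from the paper's proof, but it has a serious gap that you partly acknowledge yourself and that, in my judgment, is where the whole argument lives.

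The paper proves \cref{thm:MAMB} by first extracting (\cref{prop:partition}) $m$ copies of a size-$r$ set from $A$ and $m'$ copies of a size-$r'$ set from $B$ with $mr^3\gtrsim M(A)/\log n$ and $m'(r')^3\gtrsim M(B)/\log n$, then invoking \cref{lem:mmrr}. That lemma does not work with characteristic functions of the permutation sum directly; it decomposes $\pi$ via a product of independent random disjoint transpositions, producing a genuine Littlewood--Offord Bernoulli sum $\sum_i \xi_i(a_{\pi(2i-1)}-a_{\pi(2i)})(b_{\pi'(2i-1)}-b_{\pi'(2i)})$, applies Hal\'asz's theorem (\cref{thm:halasz}) to that, and reduces the problem to bounding the ``bilinear additive energy'' quantities $\kappa_c'(A,B)$, which count solutions of $\sum_k c_k(a_{i_k}-a_{j_k})(b_{i_k'}-b_{j_k'})=0$. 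The crucial external input is then the Roche-Newton--Rudnev Minkowski distance energy bound (\cref{lem:rnr}), which ultimately rests on Guth--Katz, giving $\kappa_c(A',B')\lesssim \log n / (rr')$ for the extracted sets. That incidence-geometric estimate is the entire reason one gets the full factor $\sqrt{M(A)M(B)}$ rather than just one of $M(A)$ or $M(B)$.

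Your Fourier/permanent plan hides exactly this step. After the Wiener formula (which is fine), everything depends on two claims that are not established: (i) that $\lvert\phi(t)\rvert^2$ admits a tractable ``cluster/cycle expansion'' with a controllable truncation error whose leading term factors as $\mathbb{E}_\rho\prod_i\phi_B(t(a_i-a_{\rho(i)}))$, and (ii) that the Lebesgue measure of the exceptional set $\{t : E(t)=O(1)\}$ is governed by $M(A)M(B)$ through a ``rank-weighted displacement'' heuristic. Neither is justified. Claim (i) is not a direct consequence of the composition trick: $\operatorname{per}(e^{it a_i b_j})$ does not factor over rows, and what you call the leading term already assumes a decoupling that would need its own argument. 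Claim (ii) is the heart of the matter: the ``level-set count of near-collisions'' is, in disguise, the count of near-solutions to equations of the form $(a_i-a_j)b_k\approx (a_{i'}-a_{j'})b_{k'}$ (or the four-term bilinear relations the paper uses), and there is no reason a rank-displacement variance computation alone should bound this — that is precisely the kind of statement for which the paper needs the Roche-Newton--Rudnev/Guth--Katz machinery. Without some substitute for that multiplicative-energy input, the Fourier approach plausibly yields only a single factor ($\sqrt{n/\max\{M(A),M(B)\}}$ or worse) as you yourself warn at the end. In short: the plan is a reasonable alternative framework, and Wiener-plus-Hal\'asz is a classical template, but the proposal as written asserts rather than proves the two steps that carry the difficulty, and it omits entirely the incidence-geometric bound on which the paper's product $M(A)M(B)$ ultimately rests.
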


We note that \cref{thm:MAMB} has the unusual statement that the bound is either relatively strong or nonexistent. 

Let us also remark that \cref{thm:main} and \cref{thm:MAMB} are tight up to subpolynomial factors. Indeed, suppose that $\lambda$ and $\mu$ are partitions of $n$, and $A$ and $B$ are such that $A$ and $B$ contain $\lambda_i$ and $\mu_i$ copies, respectively, of $i-1$ for all $i$. In this case, it is straightforward to compute that
\[\Var\brac*{\sum_{i=1}^n a_i b_{\pi(i)}} \asymp \frac{M(A)M(B)}{n}.\]
Since all elements of $A$ and $B$ are integers, it follows from Chebyshev's inequality and the pigeonhole principle that
\[Q\brac*{\sum_{i=1}^n a_i b_{\pi(i)}} \gsim \frac{n^{1/2}}{\sqrt{M(A)M(B)}}.\]
Moreover, the condition of $M(A)M(B) \geq n^{3+\eps}$ in \cref{thm:MAMB} is necessary. Indeed, if 
\[A = \brac[\big]{\floor{n/2}} \cup \set{0,0,\ldots,0}\quad\text{and}\quad B = \set{1,0,0,\ldots},\]
then $\mu(A) = (\lceil n/2\rceil,1,\ldots,1)$ and $M(A) \asymp \sum_{i=1}^{\lfloor n/2\rfloor} i^2 \asymp n^3$, while $M(B) = 1$. In particular, we have $M(A)M(B) \asymp n^3$. On the other hand, $\sum_{i=1}^n a_i b_{\pi(i)}$ is simply the single value of $A$ matched to the unique nonzero entry of $B$, i.e.\ a uniformly
random element of $A$. Since $A$ contains $\geq n/2$ zeroes, we have
\[Q\brac*{\sum_{i=1}^n a_i b_{\pi(i)}} \geq \setp\left[\sum_{i=1}^n a_i b_{\pi(i)} = 0\right] \geq \frac{1}{2}.\]

\smallskip

We discuss the proofs of \cref{thm:main} and \cref{thm:MAMB} in \cref{sec:proofs}. An important ingredient is a Minkowski distance energy result of Roche-Newton and Rudnev from \cite{RNR15} (which in turn relies on a deep incidence theorem for points and lines in $\mathbb{R}^{3}$ due to Guth and Katz from \cite{guth2015erdos}). 

\medskip
\subsection*{Notation} We use the notation $[N]$ to denote the set $\left\{1,\ldots,N\right\}$. We use $f \lsim g$ to denote $f \leq Cg$ for some $C$, where the constant $C$ may depend on subscripts on the $\lsim$. We use $f \asymp g$ to denote $f \lsim g$ and $f \gsim g$, with subscripts treated similarly.

Given two random variables $X$ and $Y$, we let $Q[X\mid Y]$ denote the random variable $f(Y)$ where $f(y) = \sup_x \setp[X = x \mid Y = y]$. We will often use the fact that $Q[X] \leq \sete_Y[Q[X \mid Y]]$.

\medskip
\subsection*{Acknowledgments}
We thank Noga Alon for helpful conversations. C.P.\ was supported by NSF grant DMS-2246659. D.Z.\ was supported by the NSF Graduate
Research Fellowship Program (grant DGE-2039656).

\section{Proofs} \label{sec:proofs}
We begin with a few probabilistic preliminaries. We first state a standard Chernoff bound.
\begin{fact} \label{fact:chernoff}
Let $n$ be a positive integer and $0 < \delta, p < 1$ be real numbers. If $X$ is the sum of $n$ independent $p$-Bernoulli variables, then
\[\setp[X \leq (1-\delta)pn] \leq e^{-\delta^2 pn/2}.\]
\end{fact}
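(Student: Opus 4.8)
The plan is the standard exponential-moment (Chernoff) argument. Write $X = \sum_{i=1}^n X_i$ with the $X_i$ independent $p$-Bernoulli, so that $\sete[X] = pn$. For any parameter $t > 0$, applying Markov's inequality to the nonnegative random variable $e^{-tX}$ gives
\[\setp[X \leq (1-\delta)pn] = \setp[e^{-tX} \geq e^{-t(1-\delta)pn}] \leq e^{t(1-\delta)pn}\,\sete[e^{-tX}].\]
By independence, $\sete[e^{-tX}] = \prod_{i=1}^n \sete[e^{-tX_i}] = \brac*{1 + p(e^{-t}-1)}^n$, and the elementary inequality $1 + x \leq e^x$ gives $\sete[e^{-tX}] \leq e^{pn(e^{-t}-1)}$. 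Combining these bounds,
\[\setp[X \leq (1-\delta)pn] \leq \exp\brac*{pn\brac*{t(1-\delta) + e^{-t} - 1}}.\]

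Next I would optimize the exponent over $t > 0$. Differentiating $t(1-\delta) + e^{-t} - 1$ in $t$ and setting the derivative to zero gives $e^{-t} = 1-\delta$, i.e.\ $t = -\log(1-\delta)$; this value is genuinely positive since $0 < \delta < 1$, so the Markov step above is legitimate. Substituting it yields
\[\setp[X \leq (1-\delta)pn] \leq \exp\brac*{-pn\brac*{(1-\delta)\log(1-\delta) + \delta}}.\]

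It then remains to check the scalar inequality $(1-\delta)\log(1-\delta) + \delta \geq \delta^2/2$ for $\delta \in (0,1)$. Setting $g(\delta) = (1-\delta)\log(1-\delta) + \delta - \delta^2/2$, one computes $g(0) = 0$ and $g'(\delta) = -\log(1-\delta) - \delta = \sum_{k \geq 2} \delta^k/k \geq 0$, so $g \geq 0$ throughout $(0,1)$; feeding this into the previous display gives exactly $\setp[X \leq (1-\delta)pn] \leq e^{-\delta^2 pn/2}$. This argument is entirely classical and presents no genuine obstacle; the closest thing to a sticking point is simply verifying the two elementary scalar facts used along the way, namely that the optimizing $t = -\log(1-\delta)$ is positive (so that the Markov step is valid) and that $g(\delta) \geq 0$ on $(0,1)$ (so that the optimized exponent collapses to $-\delta^2 pn/2$). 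Alternatively, one could bypass all of this and simply invoke a standard reference.
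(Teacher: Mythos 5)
Your proof is correct. The paper states this as a standard fact and gives no proof of its own, so there is nothing to compare against; your exponential-moment argument (Markov on $e^{-tX}$, independence, $1+x\le e^x$, optimize at $t=-\log(1-\delta)$, then check $(1-\delta)\log(1-\delta)+\delta\ge\delta^2/2$) is exactly the classical derivation that any cited reference would give.
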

Via a conditioning argument, this leads to the following corollaries.
\begin{prop} \label{prop:fixsumchernoff}
Let $m \leq n$ be positive integers and $0 < \delta, p < 1$ be real numbers such that $pn$ is an integer. If $S$ is a uniformly random $pn$-element subset of $[n]$, then
\[\setp\brac[\big]{\abs{S \cap [m]} \leq (1-\delta)pm} \leq (n+1)e^{-\delta^2pm/2}.\]
\end{prop}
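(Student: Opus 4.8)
The plan is to transfer \cref{fact:chernoff} from the independent Bernoulli model to the uniform-subset model by a standard conditioning argument. Introduce independent $p$-Bernoulli random variables $\xi_1,\dots,\xi_n$ and set $T = \set{i \in [n] : \xi_i = 1}$. The first step is to observe that, conditioned on the event $\set[\big]{\sum_{i=1}^n \xi_i = pn}$ (which is nonempty since $pn$ is an integer in $\set{0,1,\dots,n}$), the random set $T$ is distributed exactly as the uniformly random $pn$-element subset $S$. Consequently
\[\setp\brac[\big]{\abs{S \cap [m]} \leq (1-\delta)pm} = \setp\left[\sum_{i=1}^m \xi_i \leq (1-\delta)pm \;\middle|\; \sum_{i=1}^n \xi_i = pn\right].\]

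Next I would bound this conditional probability by the quotient $\setp[\sum_{i=1}^m \xi_i \leq (1-\delta)pm]\big/\setp[\sum_{i=1}^n \xi_i = pn]$, using only that for any events the conditional probability is at most the unconditional probability divided by the probability of the conditioning event. The numerator is at most $e^{-\delta^2 pm/2}$ by \cref{fact:chernoff} applied to $\sum_{i=1}^m \xi_i$, which is a sum of $m$ independent $p$-Bernoullis. For the denominator, $\sum_{i=1}^n \xi_i$ has the binomial distribution $\mathrm{Bin}(n,p)$, which takes at most $n+1$ values, so its most likely value has probability at least $\frac1{n+1}$; since $pn$ is an integer and $0 < p < 1$, the mode of $\mathrm{Bin}(n,p)$ equals $\floor{(n+1)p} = pn$, so in fact $\setp[\sum_{i=1}^n \xi_i = pn] \geq \frac1{n+1}$. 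Multiplying the three estimates gives the claimed bound $(n+1)e^{-\delta^2 pm/2}$.

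There is no substantial obstacle here; the only point needing a little care is the lower bound $\setp[\mathrm{Bin}(n,p) = pn] \geq \frac1{n+1}$, where one should note that $pn$ is genuinely a mode of the binomial rather than merely invoking the pigeonhole principle for some unspecified value. (If one wanted to sidestep the mode identity, losing a constant factor and replacing $n+1$ by $2(n+1)$ via a cruder estimate would also suffice, but the mode computation keeps the stated constant clean.)
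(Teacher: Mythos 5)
Your proof is correct and follows essentially the same route as the paper's: both introduce i.i.d.\ $p$-Bernoullis, condition on their sum equaling $pn$, lower-bound the probability of that event by $\frac{1}{n+1}$ using that $pn$ is a mode of $\mathrm{Bin}(n,p)$, and apply \cref{fact:chernoff} to the unconditioned partial sum. The only difference is cosmetic: you spell out the mode identity $\floor{(n+1)p}=pn$ while the paper simply cites it as well-known.
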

\begin{proof}
Let $S'$ be a random subset of $[n]$ chosen by sampling every element independently with probability $p$. It is well-known (and easy to check) that the distribution of $\abs{S'}$ peaks at $pn$, so $\setp[\abs{S'} = pn] \geq \frac{1}{n+1}$. Therefore
\begin{multline*}
\setp\brac[\big]{\abs{[m] \cap S} \leq (1-\delta)pm} = \setp\bracmid[\big]{\abs{[m] \cap S'} \leq (1-\delta)pm}{\abs{S'} = pn} \\ = \frac{\setp\brac[\big]{\abs{[m] \cap S'} \leq (1-\delta)pm\text{ and } \abs{S'} = pn}}{\setp\brac[\big]{\abs{S'} = pn}} \leq (n+1) \setp\brac[\big]{\abs{[m] \cap S'} \leq (1-\delta)pm}.
\end{multline*}
Applying \cref{fact:chernoff} finishes.
\end{proof}
\begin{prop} \label{prop:pairchernoff}
Let $m, n$ be positive integers such that $2m \leq n$ and $0 < \delta, p, q < 1$ be real numbers such that $pn$ and $qn$ are integers. Let $S$ and $T$ be uniformly random $pn$- and $qn$-element subsets of $[n]$, respectively, chosen independently from each other. Then, if $I$ is the set of indices $i \in [m]$ such that
\[\abs{\set{2i-1, 2i} \cap S} = \abs{\set{2i-1, 2i} \cap T} = 1,\]
we have
\[\setp\brac[\big]{\abs{I} \leq (1-\delta)\cdot 4p(1-p)q(1-q)m} \leq (n+1)^2e^{-\delta^2\cdot 4p(1-p)q(1-q)m/2}.\]
\end{prop}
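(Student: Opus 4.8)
My plan is to mimic the proof of \cref{prop:fixsumchernoff}, now working with two independent Bernoulli-sampled sets at once. I would let $S'$ be the random subset of $[n]$ obtained by including each element independently with probability $p$, and $T'$ the analogous set with probability $q$, taken independently of $S'$; write $I'$ for the set defined exactly like $I$ but with $(S',T')$ in place of $(S,T)$. Since $pn$ is an integer and $0 < p < 1$, the binomial variable $\abs{S'}$ has a mode at $pn$, so $\setp[\abs{S'} = pn] \ge \frac{1}{n+1}$, and likewise $\setp[\abs{T'} = qn] \ge \frac{1}{n+1}$; by independence, $\setp[\abs{S'} = pn \text{ and } \abs{T'} = qn] \ge \frac{1}{(n+1)^2}$. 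Conditioning on this event makes $S'$ a uniformly random $pn$-subset and $T'$ a uniformly random $qn$-subset, still independent of each other (the two conditioning events involve disjoint sources of randomness), so the conditional law of $I'$ agrees with the law of $I$. Hence for every threshold $t$,
\[\setp[\abs{I} \le t] = \setp[\abs{I'} \le t \mid \abs{S'} = pn \text{ and } \abs{T'} = qn] \le (n+1)^2 \setp[\abs{I'} \le t].\]

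It then remains to bound $\setp[\abs{I'} \le t]$ for $t = (1-\delta)\cdot 4p(1-p)q(1-q)m$ using \cref{fact:chernoff}. For each $i \in [m]$ the pair $\set{2i-1,2i}$ lies in $[n]$ (this is where $2m \le n$ is used), and these $m$ pairs are pairwise disjoint. In the Bernoulli model one has $\setp[\abs{\set{2i-1,2i} \cap S'} = 1] = 2p(1-p)$ and $\setp[\abs{\set{2i-1,2i} \cap T'} = 1] = 2q(1-q)$, and these two events are independent, since they depend on the memberships of disjoint pairs of coordinates in the independent samples $S'$ and $T'$; therefore $\setp[i \in I'] = 4p(1-p)q(1-q)$. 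Moreover the event $\set{i \in I'}$ is determined by the four indicator bits attached to $\set{2i-1,2i}$, and because the pairs are disjoint these blocks of bits are independent across $i \in [m]$, so the events $\set{i \in I'}$ are mutually independent. Thus $\abs{I'}$ is a sum of $m$ independent $4p(1-p)q(1-q)$-Bernoulli random variables, and \cref{fact:chernoff} yields $\setp[\abs{I'} \le (1-\delta)\cdot 4p(1-p)q(1-q)m] \le e^{-\delta^2 \cdot 4p(1-p)q(1-q)m/2}$. Combining this with the displayed inequality finishes the proof.

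I do not expect a genuine obstacle here: the argument is structurally identical to the proof of \cref{prop:fixsumchernoff}. The only place that needs a little care is the independence bookkeeping in the second paragraph — verifying that $S'$ and $T'$ remain independent after conditioning on their cardinalities (true since the conditioning events are measurable with respect to disjoint randomness), and that the $m$ indicators $\set{i \in I'}$ decouple, which is exactly the role played by the hypothesis $2m \le n$ (it ensures the pairs $\set{2i-1,2i}$ are disjoint subsets of $[n]$).
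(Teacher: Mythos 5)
Your proof is correct and follows essentially the same approach as the paper: Bernoulli-sample $S'$ and $T'$ independently, use the fact that the binomial mode probability is at least $\frac{1}{n+1}$ (squared, by independence) to transfer to the uniform model at a cost of $(n+1)^2$, and then apply the Chernoff bound to the $m$ independent $4p(1-p)q(1-q)$-Bernoulli indicators. The paper simply says ``arguing similarly to the proof of \cref{prop:fixsumchernoff}'' where you spell out that transfer step; the content is identical.
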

\begin{proof}
Let $S'$ and $T'$ be random subsets of $[n]$ chosen by sampling every element independently with probabilities $p$ and $q$, respectively. Define $I'$ similarly to $I$ but with $S$ and $T$ replaced with $S'$ and $T'$. Arguing similarly to the proof of \cref{prop:fixsumchernoff}, it suffices to show that
\[\setp\brac[\big]{\abs{I'} \leq (1-\delta)\cdot 4p(1-p)q(1-q)m} \leq e^{-\delta^2 \cdot 4p(1-p)q(1-q)m/2}.\]
To see this, observe that every index $i \in [m]$ is in $I'$ with probability $4p(1-p)q(1-q)$, and that these events are independent. Thus we are done by \cref{fact:chernoff}.
\end{proof}

We now introduce a notion of additive structure which will be useful for the proof.

\begin{defn}
Let $s \geq 2$ be an integer. Suppose $A = \set{a_1,\ldots,a_n}$ and $B = \set{b_1,\ldots,b_{n'}}$ are nonempty multisets of real numbers, and $c = (c_1,\ldots,c_s)$ is a tuple of nonzero integers. Let $K_c(A,B)$ is the number of $4s$-tuples $(i_1,j_1,\ldots,i_s,j_s,i'_1,j'_1,\ldots,i'_s,j'_s) \in [n]^{2s} \times [n']^{2s}$ such that
\[\sum_{k=1}^s c_k (a_{i_k} - a_{j_k})(b_{i'_k} - b_{j'_k})  = 0.\]
Let $K'_c(A,B)$ be the number of such tuples, where we impose the additional condition that $i_1,j_1,\ldots,i_s,j_s$ are distinct and $i'_1,j'_1,\ldots,i'_s,j'_s$ are distinct.
Finally, let 
\[\kappa_c(A,B) = K_c(A,B)/(nn')^{2s}\quad\text{and}\quad\kappa'_c(A,B) = K_c'(A,B)/(nn')^{2s}.\]
\end{defn}
We first show an anticoncentration result in terms of the $\kappa'_c(A, B)$.
\begin{lem} \label{lem:halasz}
Let $A = \set{a_1,\ldots,a_n}$ and $B = \set{b_1,\ldots,b_n}$ be multisets of real numbers, neither of which contain more than $2n/3$ copies of a single element. Let $\pi$ be a uniformly random permutation of $[n]$. Then for any $D > 0$ there exists a finite set $C$ of tuples of nonzero integers (depending only on $D$) such that
\[Q\brac*{\sum_{i=1}^n a_i b_{\pi(i)}} \lsim_D n^{-D} + \frac{\sum_{c\in C} \kappa'_c(A,B)}{n^{1/2}}.\]
\end{lem}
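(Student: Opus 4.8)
The plan is to run a Fourier-analytic (Esséen-type) argument adapted to the permutation model. For a fixed real $\xi$, write $\varphi(\xi) = \sete_\pi\brac{e(\xi \sum_i a_i b_{\pi(i)})}$ (where $e(x) = e^{2\pi i x}$), so that by the standard Esséen inequality $Q[\sum_i a_i b_{\pi(i)}] \lsim \int_{-1/2}^{1/2} \abs{\varphi(\xi)} \, d\xi$ after rescaling so that the relevant scale is order $1$. The goal is to bound this integral. The obstacle compared to the independent model is that $\varphi(\xi)$ is a permanent-like object rather than a product, so we cannot directly factor it; the remedy is to pass to a high even moment of $\abs{\varphi(\xi)}$ and expand combinatorially. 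Specifically, for a parameter $s$ (depending on $D$), one estimates $\int \abs{\varphi(\xi)}^{2s} d\xi$; expanding the $2s$-th power and using the swapping trick (conditioning on which pairs $\{2i-1,2i\}$ are ``split'' by the relevant sub-permutations, which is exactly what Propositions \ref{prop:fixsumchernoff} and \ref{prop:pairchernoff} are designed to control) produces, after integrating in $\xi$, a main term governed by the count of solutions to $\sum_k c_k(a_{i_k}-a_{j_k})(b_{i'_k}-b_{j'_k}) = 0$ with distinct indices — i.e.\ exactly $K'_c(A,B)$ for a suitable finite family of coefficient vectors $c$ arising from the moment expansion.

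More concretely, I would first reduce to bounding $\sete_\pi[\text{something}]$ by a "decoupling" step: split $[n]$ into $m = \lfloor n/2\rfloor$ blocks $\{2i-1,2i\}$, and compare $\sum_i a_i b_{\pi(i)}$ for $\pi$ with a modified sum where within each block the $a$-values (or $b$-values) are swapped with probability $1/2$ independently. The hypothesis that neither $A$ nor $B$ has more than $2n/3$ copies of one element guarantees (via Propositions \ref{prop:fixsumchernoff}/\ref{prop:pairchernoff} with appropriate $p,q$) that with probability $1 - O(n^{-D})$ a linear-in-$n$ number of blocks are "active" — i.e.\ contribute a genuine $\pm(a_{2i-1}-a_{2i})(b_{\sigma(2i-1)}-b_{\sigma(2i)})$ fluctuation — which is what turns the per-step characteristic function bound $\abs{\cos(\pi \xi t)}$ into exponential decay. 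The $n^{-D}$ term in the statement absorbs the failure probability of this concentration event. On the active blocks, one gets $\abs{\varphi(\xi)} \le \prod_{\text{active }i} \abs{\cos(\pi\xi \Delta_i)} + n^{-D}$-type bounds, and then takes the $2s$-th moment over $\xi \in [-1/2,1/2]$ of the cosine product.

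The heart of the matter is the moment computation: $\int_{-1/2}^{1/2} \prod_{i} \cos^{2s}(\pi \xi \Delta_i)\,d\xi$, after expanding each $\cos^{2s}$ into exponentials $e(\xi \sum_k \pm \Delta_{i_k})$ via the binomial theorem, integrates to a count of (approximate) additive relations $\sum_k c_k \Delta_{i_k} = 0$ among the block-differences $\Delta_i = (a_{2i-1}-a_{2i})(b_{\pi(2i-1)}-b_{\pi(2i)})$; averaging this count over the random matching and rescaling by $(nn')^{2s}$ yields $\sum_{c\in C}\kappa'_c(A,B)$, where $C$ is the (finite, $D$-dependent) set of coefficient tuples $c$ with $\sum_k \abs{c_k} \le 2s$. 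Taking $2s$-th roots converts the bound $\int\abs\varphi^{2s} \lsim_D n^{-D} + \sum_c \kappa'_c(A,B) \cdot n^{-?}$ into the claimed $n^{-1/2}$ normalization: the $n^{1/2}$ in the denominator of the lemma comes from the fact that a single active block already contributes a factor $\int\cos^{2s} \asymp s^{-1/2}$, and there are $\asymp n$ of them, giving $n^{-s/2}$ before we divide by $(nn')^{2s}\cdot(\text{something})$ — the bookkeeping here is the routine-but-delicate part.

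I expect the main obstacle to be precisely this bookkeeping in the moment expansion: keeping track of which index patterns in the expansion of $\prod\cos^{2s}$ correspond to "genuinely distinct" tuples (contributing to $K'_c$) versus degenerate collisions (which must be shown to contribute a lower-order term that gets swallowed by either the $n^{-D}$ error or the main term), and verifying that the rescaling by $(nn)^{2s}$ and the $2s$-th root interact correctly to produce exactly the exponent $1/2$ rather than something off by a power. A secondary subtlety is ensuring the family $C$ depends only on $D$ and not on $A$, $B$, or $n$; this follows because $s$ is chosen as a function of $D$ alone (roughly $s \asymp D$) and $C = \setmid{c \in (\setz\setminus\{0\})^{\le s}}{\sum_k\abs{c_k}\le 2s}$, but it needs to be stated cleanly.
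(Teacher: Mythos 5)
Your decoupling step is exactly the one the paper uses: conjugate by a random product of the transpositions $(2i-1\;2i)$ to introduce independent Bernoulli randomness within blocks, and use \cref{prop:fixsumchernoff,prop:pairchernoff} (together with the $2n/3$-multiplicity hypothesis, via a red/blue coloring) to guarantee $\Omega(n)$ active blocks outside an $n^{-D}$-probability failure event. That part of the plan matches the paper.

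Where you diverge — and where the gap is — is in what you do after that reduction. The paper simply invokes Hal\'asz's theorem (\cref{thm:halasz}) as a black box, \emph{conditionally on $(\pi,\pi')$}: on the good event it gives
$Q[\cdots\mid\pi,\pi'] \lsim_r R_{r,\pi,\pi'}\,n^{-2r-1/2}$, a bound \emph{linear} in $R_{r,\pi,\pi'}$. Taking expectations then produces a linear average $\sete[R_{r,\pi,\pi'}]$, whose nondegenerate terms convert cleanly into $n^{2r}\sum_{c}\kappa'_c(A,B)$ and whose degenerate terms (at most one net summand after combining like terms) contribute $O_r(n^r)$, absorbed by $n^{-D}$. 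The linearity in $\kappa'_c$ in the lemma's conclusion comes directly from the linearity of Hal\'asz's bound in $R_r$.

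Your route instead tries to re-derive Hal\'asz by estimating $\int|\varphi|^{2s}\,d\xi$ and taking a $2s$-th root, and this cannot yield the lemma as stated. Even granting the rescaling and lattice issues (the coefficients $\Delta_i$ are arbitrary reals, so one must first reduce to the rational/integer case before $\int_{-1/2}^{1/2}$ makes sense), the $2s$-th root step gives a bound of the shape $\bigl(\sum_c\kappa'_c\bigr)^{1/(2s)}$ times a power of $n$, not a bound linear in $\sum_c\kappa'_c$. Hal\'asz's actual argument does not take roots of $\int|\varphi|^{2s}$; it controls the measure of sublevel sets of $\xi\mapsto\sum_i\|\Delta_i\xi\|^2$ via a Markov-type step that keeps the $R_r$ dependence linear. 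If you want to avoid citing Hal\'asz, you would need to reproduce that sublevel-set argument; expand-and-take-roots is not a substitute. The cleaner path is to prove the conditional Bernoulli bound (or cite Hal\'asz) and only then pass to an expectation over $(\pi,\pi')$, which is exactly what the paper does.

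One further small point: a bare expansion of $\prod_i\cos^{2s}(\pi\xi\Delta_i)$ counts solutions with coefficients in $\{-s,\dots,s\}$ at each index, which is not the same as the set of tuples entering $R_r$ (two coefficients $\pm 1$ drawn $2r$ times from the index set). Again, following Hal\'asz's structure rather than a direct power expansion resolves this, since the object he raises to the $2r$-th power is $\sum_i\cos(2\pi\Delta_i\xi)$, not $\prod_i\cos(\pi\Delta_i\xi)$.
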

\begin{proof}
To prove Lemma \ref{lem:halasz}, we will use the aforementioned anticoncentration bound of Hal\'asz \cite{Hal77}:
\begin{thm}[Hal\'asz] \label{thm:halasz}
Let $a_1,\ldots,a_n$ be nonzero reals and let $\xi_1,\ldots,\xi_n$ be independent random variables drawn from $\set{0,1}$ uniformly at random. Then
\[Q\brac*{\sum_{i=1}^n a_i \xi_i} \lsim_r R_r n^{-2r-1/2},\]
where for a positive integer $r$, we define $R_r$ to be the number of choices $(i_1,\ldots,i_{2r}) \in [n]^{2r}$ and $(\eps_1,\ldots,\eps_{2r}) \in \set{\pm 1}^{2r}$ such that $\eps_1 a_{i_1} + \cdots + \eps_{2r} a_{i_{2r}} = 0$.
\end{thm}
To apply this theorem, let $m = \floor{n/2}$, let $\pi'$ be a uniformly random permutation independent from $\pi$, and let $\sigma$ be a random permutation obtained by taking the product of a uniformly random subset of the $m$ transpositions $(1\;2), (3\;4),\ldots,(2m-1\;2m)$. Then $\sum_i a_{\pi(i)} b_{\pi'(\sigma(i))}$ has the same distribution as $\sum_i a_i b_{\pi(i)}$, so we have
\[Q\brac*{\sum_{i=1}^n a_i b_{\pi(i)}} \leq \sete_{\pi,\pi'}\brac*{Q\bracmid*{\sum_{i=1}^n a_{\pi(i)} b_{\pi'(\sigma(i))}}{\pi,\pi'}}.\]
Letting $\xi_i$ be the indicator variable of the occurrence of $(2i-1\;2i)$ in $\sigma$, we have
\[\sum_{i=1}^n a_{\pi(i)} b_{\pi'(\sigma(i))} = \sum_{i=1}^n a_{\pi(i)} b_{\pi'(i)} - \sum_{i=1}^m \xi_i(a_{\pi(2i-1)} - a_{\pi(2i)})(b_{\pi'(2i-1)} - b_{\pi'(2i)}).\]
In the case where there there exist at least $m/10$ values of $i \in [m]$ such that $(a_{\pi(2i-1)} - a_{\pi(2i)})(b_{\pi'(2i-1)} - b_{\pi'(2i)}) \neq 0$ (call $(\pi,\pi')$ \vocab{good} if this is true), an application of \cref{thm:halasz} (ignoring the zero coefficients) implies that
\[Q\bracmid*{\sum_{i=1}^n a_i b_{\pi(i)}}{\pi,\pi'} \lsim_r R_{r,\pi,\pi'} n^{-2r-1/2},\]
where $R_{r,\pi,\pi'}$ is the number of choices of $(i_1,\ldots,i_{2r}) \in [m]^{2r}$ and $(\eps_1,\ldots,\eps_{2r}) \in \set{\pm 1}^{2r}$ such that
\[\sum_{j=1}^{2r} \eps_j (a_{\pi(2i_j-1)} - a_{\pi(2i_j)})(b_{\pi'(2i_j-1)} - b_{\pi'(2i_j)}) = 0. \tag{$\divideontimes$} \label{eq:rrpp}\]
We conclude that
\[Q\brac*{\sum_{i=1}^n a_i b_{\pi(i)}} \lsim_r \setp[(\pi,\pi')\text{ is not good}] + \sete[R_{r,\pi,\pi'}] n^{-2r-1/2}.\]

To bound the probability that $(\pi,\pi')$ is not good, we first claim that we may color the elements of $A$ and $B$ red and blue such that no two occurrences of the same element in the same multiset receive different colors, and that the counts of red elements in $A$ and $B$ are both in the interval $[n/3, 2n/3]$. In the case where an element of $A$ occurs with multiplicity at least $n/3$, we may simply color all occurrences of this element red and all other elements blue. If not, consider initially coloring all elements of $A$ blue and iteratively choosing a blue element of $A$ and coloring all of its occurrences in $A$ red. Since all multiplicities are less than $n/3$, at some point the number of red elements must be in $[n/3, 2n/3]$. Repeating the same argument for $B$ yields the desired coloring.

Given this coloring, the set of indices $i \in [n]$ such that $a_{\pi(i)}$ is red has the same distribution as a random $pn$-element subset of $[n]$, for some $p$ with $p(1-p) \geq 2/9$. The set of indices $i \in [n]$ such that $b_{\pi'(i)}$ is red has the same distribution as an independent random $qn$-element subset of $[n]$, for some $q$ with $q(1-q) \geq 2/9$. Since $4 \cdot \frac{2}{9} \cdot \frac{2}{9} = \frac{16}{81} > \frac{1}{10}$, applying \cref{prop:pairchernoff} yields that there exists some absolute constant $\alpha > 0$ such that with probability at least $1-(n+1)^2e^{-\alpha m}$, there are at least $m/10$ indices $i \in [m]$ such that $a_{\pi(2i-1)}$ and $a_{\pi(2i)}$ receive different colors and $b_{\pi'(2i-1)}$ and $b_{\pi'(2i)}$ receive different colors. In this case, $(\pi, \pi')$ must certainly be good, so
\[\setp[(\pi,\pi') \text{ is not good}] \leq (n+1)^2 e^{-\alpha m}.\]
for some $\alpha > 0$.

To bound $\sete[R_{r,\pi,\pi'}]$, fix a choice of $(i_1,\ldots,i_{2r}) \in [m]^{2r}$ and $(\eps_1,\ldots,\eps_{2r}) \in \set{\pm 1}^{2r}$ and consider combining like terms in \labelcref{eq:rrpp}. It is possible that we are left with one or fewer terms, i.e.\ $\sum_{i_j = i} \eps_j \neq 0$ for at most one choice of $i$. There are $O_r(n^r)$ cases where this occurs, since if $r+1$ distinct values of $i$ occur, at least two must appear exactly once. For all other choices, the probability that \labelcref{eq:rrpp} is satisfied is exactly
\[\frac{K'_c(A, B)}{(n(n-1) \cdots (n-2s+1))^2} \lsim_r \kappa'_c(A, B)\]
for some $c$ of length $s \geq 2$. Since there are only finitely many possibilities for $c$, we conclude that there is a finite set $C$ such that
\[\sete[R_{r,\pi,\pi'}] \lsim_r n^r + n^{2r}\sum_{c\in C}\kappa'_c(A,B).\]
Putting everything together, we get
\[Q\brac*{\sum_{i=1}^n a_i b_{\pi(i)}} \lsim_r (n+1)^2 e^{-\alpha m} + n^{-r-1/2} + \frac{\sum_{c\in C}\kappa'_c(A,B)}{n^{1/2}}.\]
Choosing $r \geq D-1/2$ finishes.
\end{proof}

The next lemma is a uniform estimate for every $\kappa_c(A, B)$, with input from incidence geometry. 

\begin{lem} \label{lem:rnr}
Let $A$ and $B$ be finite sets of real numbers of size at least $2$. Then for any $c$,
\[\kappa_c(A, B) \lsim \frac{\log {\abs{A}} + \log {\abs{B}}}{\abs{A}\abs{B}}.\]
\end{lem}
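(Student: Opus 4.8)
The plan is to reduce the count $K_c(A,B)$, for every $c$ at once, to a single ``Minkowski distance energy'' of $A$ and $B$, and then to invoke the theorem of Roche-Newton and Rudnev \cite{RNR15} that bounds such an energy. Write $m=\abs A$ and $m'=\abs B$, and let $s$ be the length of $c=(c_1,\ldots,c_s)$. First I would introduce, for $v\in\setr$, the counting function $f(v)=\#\{(a,a',b,b')\in A^2\times B^2:(a-a')(b-b')=v\}$, so that $\sum_v f(v)=(mm')^2$, and set $\mathcal E=\sum_{v\neq 0}f(v)^2$. Grouping the $4s$-tuples counted by $K_c(A,B)$ according to the products $v_k=(a_{i_k}-a_{j_k})(b_{i'_k}-b_{j'_k})$ then gives $K_c(A,B)=\sum_{c_1v_1+\cdots+c_sv_s=0}\prod_{k=1}^s f(v_k)$, the sum running over all tuples $(v_1,\ldots,v_s)$ of reals satisfying the displayed relation.

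Next I would bound this sum. In the part where all $v_k\neq 0$, I fix $v_3,\ldots,v_s$ and note that the inner sum over $v_1,v_2\neq 0$ with $c_1v_1+c_2v_2$ equal to any prescribed value is at most $\sum_{v\neq 0}f(v)^2=\mathcal E$ by Cauchy-Schwarz (after the substitution $u_i=c_iv_i$ it is simply a value of the convolution of $u\mapsto f(u/c_1)$ with $u\mapsto f(u/c_2)$). Summing over the remaining $v_3,\ldots,v_s\neq 0$ contributes a factor $\bigl(\sum_{v\neq 0}f(v)\bigr)^{s-2}\leq(mm')^{2s-4}$, so this part is at most $\mathcal E\,(mm')^{2s-4}$. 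The $4s$-tuples with some $v_k=0$ I would treat directly, peeling off each vanishing factor (which costs a factor $f(0)=mm'(m+m'-1)$) and applying the all-nonzero estimate to what remains. Dividing by $(mm')^{2s}$ then gives $\kappa_c(A,B)\lsim\mathcal E/(mm')^4$, with an implied constant independent of $c$.

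It then remains to prove $\mathcal E\lsim(mm')^3(\log m+\log m')$, and this is the crux of the argument. By construction, $\mathcal E$ is the number of solutions of $(a_1-a_2)(b_1-b_2)=(a_3-a_4)(b_3-b_4)\neq 0$ with $a_i\in A$ and $b_i\in B$; equivalently, $\mathcal E$ is the Minkowski distance energy of the grid $A\times B$, which is precisely what \cite{RNR15} estimates. The argument there recasts the defining equation, after passing to the ratio variable $\lambda=(a_1-a_2)/(a_3-a_4)$, as an incidence count between point sets and lines, and bounds those incidences through the Guth-Katz theorem for points and lines in $\setr^3$ \cite{guth2015erdos}, giving $\mathcal E\lsim(mm')^3(\log m+\log m')$; combined with the previous paragraph this finishes the proof. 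I expect this energy estimate to be the only genuine obstacle: the trivial bound $f(v)\leq mm'\min(m,m')$ for $v\neq 0$ yields only $\mathcal E\lsim(mm')^3\min(m,m')$, and the whole point of \cite{RNR15} (and the reason the Guth-Katz incidence machinery is brought in) is that the factor $\min(m,m')$ can be replaced by a logarithm. Everything before it is just Cauchy-Schwarz and elementary bookkeeping, and is in particular uniform in $c$.
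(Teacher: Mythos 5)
Your argument takes the same route as the paper's: reduce $\kappa_c$, for arbitrary $c$, by a Cauchy--Schwarz step to the $s=2$, $c=(1,-1)$ ``Minkowski distance energy'' of $A\times B$, then feed that into the Roche-Newton--Rudnev/Guth--Katz incidence bound. The paper packages the reduction probabilistically, which is tidier and, importantly, does \emph{not} attempt to strip out the $v=0$ mass: setting $Z=(A_1-A_2)(B_1-B_2)$ with $A_1,A_2$ uniform in $A$ and $B_1,B_2$ uniform in $B$, one has $\kappa_c(A,B)=\Pr[c_1Z_1+\cdots+c_sZ_s=0]$, and conditioning on $Z_3,\dots,Z_s$ gives $\kappa_c\le Q[c_1Z_1+c_2Z_2]\le\sum_x\Pr[Z=x]^2=\kappa_{1,-1}(A,B)$ by Cauchy--Schwarz. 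Then \cite[Prop.~4]{RNR15} is applied directly to $\kappa_{1,-1}$. Your ``inner sum is a convolution value $\le\lVert f\rVert_2^2$'' step is exactly this Cauchy--Schwarz, unwound combinatorially.

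Where your version has a gap is the explicit bookkeeping for the $v_k=0$ terms. When \emph{every} $v_k$ vanishes, there is no ``remaining'' sum to which the all-nonzero estimate could be applied; the contribution is simply $f(0)^s$, giving after normalization $\bigl(f(0)/(mm')^2\bigr)^s=\bigl((m+m'-1)/(mm')\bigr)^s$. This is not dominated by $\mathcal E/(mm')^4$: for balanced sizes $m\asymp m'$ it is $\asymp m^{-s}\le m^{-2}$ and fine, but for lopsided sizes (say $m=2$, $m'=M$) it is $\asymp 2^{-s}$, a constant, which is nowhere near $O\bigl((\log m+\log m')/(mm')\bigr)$. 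So the claim ``$\kappa_c\lsim\mathcal E/(mm')^4$'' with $\mathcal E=\sum_{v\ne 0}f(v)^2$ is false in general, and the step ``peel off each vanishing factor and apply the all-nonzero estimate'' does not close the case $v_1=\cdots=v_s=0$. The cleanest fix is to not exclude $v=0$ at all: prove $\kappa_c\le\kappa_{1,-1}$ as the paper does and let the cited bound on $\kappa_{1,-1}$ do the rest, rather than separating $\mathcal E$ from $f(0)$ and trying to recombine them.
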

\begin{proof}
It is a result of Roche-Newton and Rudnev \cite[Prop.~4]{RNR15} that
\[\kappa_{1,-1}(A, B) \lsim \frac{\log {\abs{A}} + \log {\abs{B}}}{\abs{A}\abs{B}},\]
so it suffices to show that $\kappa_c(A, B) \leq \kappa_{1,-1}(A,B)$ for all $c$. To do this, let $A_1,A_2$ be independent random elements of $A$ and $B_1,B_2$ be independent random elements of $B$. Let $Z = (A_1 - A_2)(B_1 - B_2)$ and let $Z_1,Z_2,\ldots$ be independent copies of $Z$. Note that
\[\kappa_c(A, B) = \setp[c_1Z_1 + \cdots + c_sZ_s = 0].\]
By conditioning on $Z_3,Z_4,\ldots,Z_s$, we find that
\[\kappa_c(A, B) \leq Q[c_1Z_1 + c_2Z_2] = \sup_a \sum_{c_1x+c_2y=a} \setp[Z = x] \setp[Z = y].\]
By the Cauchy-Schwarz inequality, this is bounded above by
\[\sum_x \setp[Z = x]^2 = \setp[Z_1 = Z_2] = \kappa_{1,-1}(A,B),\]
as desired.
\end{proof}
\begin{lem} \label{lem:mmrr}
Suppose $A$ contains as a submultiset $m$ copies of a set $A'$ of size $r$, and $B$ contains as a submultiset $m'$ copies of a set $B'$ of size $r'$. Further assume that $r,r' \geq 2$ and $mm'rr' \geq n^{1+\eps}$ for some $\eps > 0$. Then,
\[Q\brac*{\sum_{i=1}^n a_i b_{\pi(i)}} \lsim_{\eps} \frac{n^{1/2}\log n}{(mm')^{1/2}(rr')^{3/2}}.\]
\end{lem}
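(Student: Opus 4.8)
The plan is to condition on most of the random matching $\pi$, reducing to an application of \cref{lem:halasz} for a random sub-matching between the structured parts of $A$ and $B$, and then to bound the additive-energy quantities that appear \emph{on average} (rather than in the worst case) by means of \cref{lem:rnr}.

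Let $\hat A\subseteq A$ be a submultiset consisting of exactly $m$ copies of each element of $A'$, so that $|\hat A|=N_A:=mr$, and let $\hat B\subseteq B$ consist of exactly $m'$ copies of each element of $B'$, with $|\hat B|=N_B:=m'r'$; since $\hat A,\hat B$ sit inside size-$n$ multisets we have $N_A,N_B\le n$, and the hypothesis $mm'rr'\ge n^{1+\eps}$ then forces $N_A,N_B\ge n^{\eps}$. Viewing $\pi$ as a perfect matching between ``$A$-slots'' and ``$B$-slots'', let $x$ be the number of matched pairs whose $A$-slot lies in $\hat A$ and whose $B$-slot lies in $\hat B$, let $\hat A_0\subseteq\hat A$ and $\hat B_0\subseteq\hat B$ be the two resulting $x$-element sets of endpoints, and let $\rho\colon\hat A_0\leftrightarrow\hat B_0$ be the induced bijection, which is uniform conditionally on the rest of $\pi$. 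Writing $Y$ for all of the randomness of $\pi$ except $\rho$, the sum $\sum_i a_ib_{\pi(i)}$ equals a $Y$-measurable constant plus $\sum_{a\in\hat A_0}a\,\rho(a)$, so
\[Q\brac*{\sum_{i=1}^n a_ib_{\pi(i)}}\le\sete_Y\brac*{Q\bracmid*{\sum_{a\in\hat A_0}a\,\rho(a)}{Y}}.\]

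Routine concentration — a Chernoff-type bound as in \cref{fact:chernoff}, together with \cref{prop:fixsumchernoff} applied to the uniformly random $N_A$-element set $\pi(\hat A)$ meeting $\hat B$ — shows that with probability $1-n^{-\omega(1)}$ we are on an event $G$ on which $x\ge\tfrac12 N_AN_B/n\ge\tfrac12 n^{\eps}$ and no value occurs in $\hat A_0$ or in $\hat B_0$ with multiplicity exceeding $\tfrac23 x$ (conditionally on $x$ the multiplicity of any value of $A'$ in $\hat A_0$ is hypergeometric with mean $x/r\le x/2$, and $x$ is polynomially large on $G$). On $\overline G$ we bound $Q[\,\cdot\mid Y]\le 1$, contributing $n^{-\omega(1)}$. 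On $G$ the multisets $\hat A_0,\hat B_0$ satisfy the hypothesis of \cref{lem:halasz}, so for any $D$ there is a finite set $C=C(D)$ of tuples of nonzero integers with
\[Q\bracmid*{\sum_{a\in\hat A_0}a\,\rho(a)}{Y}\lsim_D x^{-D}+x^{-1/2}\sum_{c\in C}\kappa'_c(\hat A_0,\hat B_0).\]
Using $x\ge\tfrac12 N_AN_B/n$ on $G$, taking expectations, and choosing $D=\ceil{4/\eps}$ (so that the contributions $n^{-\omega(1)}$ and $(\tfrac12 n^{\eps})^{-D}$ are dominated by the target bound, which is $\gsim n^{-7/2}$ because $mm',rr'\le n^2$), it remains to prove that $\sete\brac*{\kappa'_c(\hat A_0,\hat B_0)}\lsim\frac{\log n}{rr'}$ for each fixed $c$, say of length $s\ge 2$.

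I expect this averaged bound to be the main obstacle. A worst-case estimate is hopeless: if $A'$ is an arithmetic progression then a typical $\hat A_0$ is itself a short progression of large additive energy, for which \cref{lem:rnr} yields only $\kappa'_c(\hat A_0,\hat B_0)\lsim\frac{\log x}{x^2}$ — far too big when $x\ll r$. The resolution is that conditionally on $x$ the sets $\hat A_0,\hat B_0$ are \emph{independent} uniformly random $x$-element subsets of $\hat A$ and $\hat B$ respectively (a symmetry of the matching once the number of crossing edges is fixed). Expanding $K'_c(\hat A_0,\hat B_0)$ as a sum over ordered $2s$-tuples of distinct elements of $\hat A_0$ and of $\hat B_0$, each such pair of tuples survives into $\hat A_0\times\hat B_0$ with probability $\frac{(x)_{2s}^2}{(N_A)_{2s}(N_B)_{2s}}$ (writing $(t)_j=t(t-1)\cdots(t-j+1)$), so that $\sete\brac*{\kappa'_c(\hat A_0,\hat B_0)}\le p_s$, where $p_s$ is the probability that the relation attached to $c$ holds when the $2s$ $A$-entries are drawn without replacement from $\hat A$ and the $2s$ $B$-entries are drawn without replacement from $\hat B$, independently. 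Finally, drawing $2s$ elements of $\hat A$ without replacement is the same as drawing them i.i.d.\ and conditioning on their being distinct, and i.i.d.\ uniform samples from the balanced multiset $\hat A$ are precisely i.i.d.\ uniform samples from the set $A'$; hence
\[p_s\le\frac{\kappa_c(A',B')}{\setp[2s\text{ i.i.d.\ draws from }\hat A\text{ distinct}]\cdot\setp[2s\text{ i.i.d.\ draws from }\hat B\text{ distinct}]}\le\kappa_c(A',B')\bigl(1+O_s(N_A^{-1}+N_B^{-1})\bigr).\]
As $N_A,N_B\ge n^{\eps}$ this is $\lsim\kappa_c(A',B')$, and \cref{lem:rnr} (applicable since $r,r'\ge 2$) gives $\kappa_c(A',B')\lsim\frac{\log r+\log r'}{rr'}\lsim\frac{\log n}{rr'}$. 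Assembling the estimates,
\[Q\brac*{\sum_{i=1}^n a_ib_{\pi(i)}}\lsim_\eps\sqrt{\frac{n}{N_AN_B}}\cdot\frac{\log n}{rr'}=\frac{n^{1/2}\log n}{(mm')^{1/2}(rr')^{3/2}},\]
which is the claimed bound.
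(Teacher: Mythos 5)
Your proposal is correct and follows essentially the same strategy as the paper: condition on the crossing sub-matching between the $m$ copies of $A'$ and the $m'$ copies of $B'$, use Chernoff-type concentration to ensure the sub-matching is polynomially large and balanced, apply \cref{lem:halasz} to the induced random bijection, and bound $\sete[\kappa'_c]$ using the fact that the two induced sub-multisets are (conditionally on their size) independent uniform samples, ultimately reducing to $\kappa_c(A',B')$ and \cref{lem:rnr}. The only stylistic difference is in the expectation computation: the paper cancels the binomial-coefficient ratios exactly to get $\sete[\kappa'_c]\le\kappa_c(A',B')$ outright, whereas you compare without-replacement to with-replacement sampling via a distinctness conditioning and absorb a harmless $1+O_s(N_A^{-1}+N_B^{-1})$ factor.
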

\begin{proof}
Without loss of generality suppose the $m$ copies of $A'$ are the first $mr$ elements of $A$ and $m'$ copies of $B'$ are the first $m'r'$ elements of $B$. Then consider sampling $\pi$ according to the following process:
\begin{enumerate}
    \item Choosing $u = \abs{\pi([mr]) \cap [m'r']}$ according to the distribution of $\abs{S \cap [m'r']}$, where $S$ is a random $mr$-element subset of $[n]$.
    \item Choosing $I = [mr] \cap \pi\inv([m'r'])$ and $I' = \pi([mr]) \cap [m'r']$ to be uniformly random $u$-element subsets of $[mr]$ and $[m'r']$, respectively.
    \item Assigning $\pi\rvert_{[n]\setminus I}$ according to some distribution.
    \item Choosing $\pi\rvert_I \colon I \to I'$ to be a uniformly random bijection.
\end{enumerate}
By \cref{prop:fixsumchernoff}, we have
\[\setp\brac*{u \leq \frac{mrm'r'}{2n}} \leq (n+1) e
^{mrm'r'/(8n)} \leq (n+1)e^{-n^\eps/8}.\]
Henceforth condition on a value of $u > mrm'r'/(2n) \geq n^\eps/2$.

Consider an arbitrary $a \in A'$ and let $J$ be the indices $j \in [mr]$ such that $a_j \neq a$. Then by \cref{prop:fixsumchernoff} again we have
\[\setp\brac*{\abs{I \cap J} \leq \frac{2\abs{J}}{3mr}u} \leq (mr+1)e^{-\frac{1}{18} u\abs{J}/(mr)}.\]
Since $\abs{J} \geq mr/2$, we conclude that
\[\setp\brac[\big]{\abs{I \cap J} \leq u/3} \leq (n+1)e^{-u/36} \leq (n+1)e^{-n^\eps/72}.\]
Union bounding over all $a$, we find that with probability at least $1-n(n+1)e^{-n^\eps/72}$, the multiset $\setmid{a_i}{i \in I}$ does not contain any element more than $2u/3$ times. An identical argument shows the same for $\setmid{b_i}{i \in I'}$. Henceforth condition on choices of $I$ and $I'$ such that this is true.

Under these assumptions, the randomness of step (4) is exactly the situation of \cref{lem:halasz}, which yields
\[Q\bracmid*{\sum_{i=1}^n a_i b_{\pi(i)}}{u,I,I',\pi\rvert_{[n]\setminus I}} \lsim_D u^{-D} + \frac{\sum_{c \in C}\kappa'_c(\setmid{a_i}{i \in I},\setmid{b_i}{i \in I'})}{u^{1/2}}.\]
For $c$ of length $s$, we have
\begin{align*}
    \sete_{I,I'}\brac[\big]{\kappa'_c\paren[\big]{\setmid{a_i}{i \in I},\setmid{b_i}{i \in I'}}} &= \frac{1}{u^{4s}} \frac{\binom{u}{2s}^2}{\binom{mr}{2s}\binom{m'r'}{2s}} K'_c\paren[\big]{\setmid{a_i}{i \in [ms]},\setmid{b_i}{i \in [m'r']}} \\ 
    &\leq \frac{1}{u^{4s}} \frac{u^{4s}}{(mrm'r')^{2s}} K_c\paren*{\setmid{a_i}{i \in [ms]},\setmid{b_i}{i \in [m'r']}} \\
    &= \kappa_c(A', B') \lsim \frac{\log n}{rr'},
\end{align*}
where we have used \cref{lem:rnr}. Therefore for $u > mrm'r'/(2n)$ we have
\[Q\bracmid*{\sum_{i=1}^n a_i b_{\pi(i)}}{u} \lsim_{D} n(n+1)e^{-n^\eps/72} + u^{-D} + \frac{\log n}{u^{1/2} rr'}.\]
This in turn implies
\[Q\brac*{\sum_{i=1}^n a_i b_{\pi(i)}} \lsim_D (n+1)e^{-n^\eps/8} + n(n+1)e^{-n^\eps/72} + n^{-D\eps} + \frac{n \log n}{(mm')^{1/2} (rr')^{3/2}}.\]
By choosing $D \geq 10/\eps$, the first three terms are $O_\eps(n^{-10})$, which is insignificant compared to the final term. This concludes the proof.
\end{proof}

Given \cref{lem:mmrr}, \cref{thm:main,thm:MAMB} follow from a short dyadic partitioning argument.

\begin{prop} \label{prop:partition}
Every multiset $A$ of size $n$ with at least two distinct elements contains, as a submultiset, $m$ copies of a set of size $r$ for some $m,r$ with $r \geq 2$ and $mr^3 \geq M(A)/\log n$.
\end{prop}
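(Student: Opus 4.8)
The plan is to produce the required submultiset directly, without any real case analysis: for a well-chosen $r$, take the $r$ distinct elements of $A$ having the largest multiplicities; these automatically yield $m$ copies of a set of size $r$, where $m$ is the $r$-th largest multiplicity. It then remains only to pick $r$ so that $mr^3$ is comparable to $M(A)$, which follows from a one-line harmonic-sum estimate.

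In detail, write $\mu(A) = (\mu_1,\ldots,\mu_\ell)$ with $\mu_1 \ge \cdots \ge \mu_\ell$, and let $v_1,\ldots,v_\ell$ be the distinct elements of $A$ with $v_i$ occurring $\mu_i$ times; since $A$ has at least two distinct elements, $\ell \ge 2$. For any $r \in \set{2,\ldots,\ell}$ each of $v_1,\ldots,v_r$ occurs at least $\mu_r$ times, so $A$ contains $\mu_r$ copies of the size-$r$ set $\set{v_1,\ldots,v_r}$ as a submultiset. Thus it suffices to find $r \in \set{2,\ldots,\ell}$ for which $m = \mu_r$ satisfies $mr^3 \ge M(A)/\log n$, and I would simply take $r$ to maximize $\mu_i i^3$ over $i \in \set{2,\ldots,\ell}$.

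With this choice $\mu_i i^3 \le \mu_r r^3 = mr^3$ for every $i \in \set{2,\ldots,\ell}$, so that
\[(i-1)^2\mu_i \le \frac{(i-1)^2}{i^3}\,mr^3 \le \frac{mr^3}{i},\]
where the second step uses $(i-1)^2 \le i^2$. The $i=1$ term of $M(A)$ vanishes, so summing over $i \in \set{2,\ldots,\ell}$ and bounding the resulting harmonic sum by $\sum_{i=2}^\ell \tfrac1i \le \int_1^\ell \tfrac{dx}{x} = \ln\ell \le \log n$ gives
\[M(A) = \sum_{i=2}^\ell (i-1)^2\mu_i \le mr^3\sum_{i=2}^\ell\frac1i \le mr^3\log n,\]
which is exactly the claimed inequality. (Alternatively one can group $\set{2,\ldots,\ell}$ into dyadic blocks $\set{2^k,\ldots,2^{k+1}-1}$, on each of which $(i-1)^2 < 4^{k+1}$ and $\sum_i\mu_i \le 2^k\mu_{2^k}$, and sum the $O(\log n)$ nonempty blocks; this loses only a constant factor.)

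I do not anticipate any genuine obstacle: \cref{prop:partition} is a bookkeeping step recasting the statistic $M(A)$ into the ``$m$ copies of a size-$r$ set'' shape required by \cref{lem:mmrr}. The only mild points to keep in mind are that the restriction $r \ge 2$ is essential — a single heavily repeated element could otherwise dominate the maximum, but a size-$1$ set is useless, which is why the maximization runs over $i \ge 2$ — and that with $\log$ the natural logarithm the estimate $\sum_{i=2}^\ell \tfrac1i \le \ln\ell \le \ln n$ delivers the stated inequality with essentially no slack; the degenerate case $\ell = 2$ (forcing $r = 2$, $m = \mu_2$) is immediate from $M(A) = \mu_2 \le mr^3$.
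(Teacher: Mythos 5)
Your proof is correct and follows essentially the same approach as the paper: both compare $M(A) = \sum_{i\ge 2}(i-1)^2\mu_i$ against $\max_{i\ge 2} i^3\mu_i$ via the elementary bound $(i-1)^2/i^3 \le 1/i$ and the harmonic-sum estimate $\sum_{i=2}^\ell 1/i \le \log n$. The only stylistic difference is that the paper argues by contradiction (assuming every $i^3\mu_i$ is too small) while you argue directly by taking the maximizer, which amounts to the same computation.
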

\begin{proof}
Let $\mu(A) = (\mu_1,\ldots,\mu_\ell)$. If there exists some $i \geq 2$ with $i^3\mu_i \geq M(A)/\log n$ we would be done. If not, then
\[M(A) = \sum_{i=2}^\ell(i-1)^2\mu_i < \frac{M(A)}{\log n} \sum_{i=2}^\ell \frac{(i-1)^2}{i^3} < \frac{M(A)}{\log n} \sum_{i=2}^n \frac{1}{i} < \frac{M(A)}{\log n} \int_1^n \frac{dt}{t} = M(A),\]
a contradiction.
\end{proof}

\begin{proof}[Proof of \cref{thm:MAMB}]
By \cref{prop:partition}, we may choose $m,r,m',r'$ with $r,r' \geq 2$ and $mm'(rr')^3 \geq M(A)M(B)/(\log n)^2$ such that $A$ contains $m$ copies of a set of size $r$ and $B$ contains $m'$ copies of a set of size $r'$. If $M(A)M(B) \geq n^{3+\eps}$, then we additionally have
\[mm'rr' \geq (mm')^{1/3}rr' \geq \frac{n^{1+\eps/3}}{(\log n)^{2/3}},\]
so \cref{lem:mmrr} applies with $\eps$ replaced with $\eps/4$ for large $n$. We conclude that
\[Q\brac*{\sum_{i=1}^n a_i b_{\pi(i)}} \lsim_\eps \frac{n^{1/2}\log n}{\sqrt{mm'(rr')^3}} \leq \frac{n^{1/2}(\log n)^2}{\sqrt{M(A)M(B)}}.\]
\end{proof}

\begin{proof}[Proof of \cref{thm:main}]
Pick an $\eps > 0$. If $M(A)M(B) \geq n^{3+\eps}$, applying \cref{thm:MAMB} yields that
\[Q\brac*{\sum_{i=1}^n a_i b_{\pi(i)}} \lsim_\eps \frac{n^{1/2}(\log n)^2}{\sqrt{M(A)M(B)}} \lsim_\eps \frac{n^{-1+\eps}}{\sqrt{M(B)}}.\]
If $M(A)M(B) < n^{3+\eps}$ then we have $M(B) \lsim n^\eps$, so \cref{thm:paw} yields
\[Q\brac*{\sum_{i=1}^n a_i b_{\pi(i)}} \lsim n^{-1} \lsim \frac{n^{-1+\eps}}{\sqrt{M(B)}}.\]
Since $\eps$ was arbitrary, we get the desired bound.
\end{proof}

\section{Concluding Remarks}

We previously highlighted that when $B$ is a set, $M(B)\asymp n^3$, and \cref{thm:main} specializes to the near-optimal bound
$Q \lsim n^{-5/2+o(1)}$, an analogue of a theorem of Erd\H{o}s and Moser. It is therefore natural to ask whether one can take this analogy even one step further and remove the $n^{o(1)}$ loss completely, in the spirit of the S\'ark\"{o}zy-Szemer\'edi and Stanley results for the classical Littlewood-Offord model. In this direction, we make the following precise conjecture:

\begin{conj}\label{conj:AP-extremal}
Let $A=\{a_1,\dots,a_n\}$ and $B=\{b_1,\dots,b_n\}$ be \emph{sets} of $n$ distinct real numbers, and
let $\pi$ be uniform in $S_n$. Then
\[
Q\brac*{\sum_{i=1}^n a_i\,b_{\pi(i)}}\ \le\ \Big(\frac{12}{\sqrt{2\pi}}+o(1)\Big)\,n^{-5/2}.
\]
\end{conj}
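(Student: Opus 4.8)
We conclude by sketching the approach we believe can resolve \cref{conj:AP-extremal}. First, the stated bound is attained: since $Q$ is unchanged under a nondegenerate affine substitution in $A$ and in $B$ separately, every pair of $n$-term arithmetic progressions is equivalent to $A=B=\set{0,1,\ldots,n-1}$, for which $\sum_{i=1}^n a_ib_{\pi(i)}$ is distributed as $\sum_{i=0}^{n-1}i\,\pi(i)$ and has variance $\sigma_0^2=\tfrac{n^2(n-1)(n+1)^2}{144}=\tfrac{n^5}{144}(1+o(1))$. A local limit theorem for this (aperiodic, asymptotically normal) statistic then gives $Q\brac*{\sum_{i=1}^n a_ib_{\pi(i)}}=\tfrac{1+o(1)}{\sqrt{2\pi}\,\sigma_0}=\paren*{\tfrac{12}{\sqrt{2\pi}}+o(1)}n^{-5/2}$, so it remains to prove the matching upper bound for all pairs of distinct sets.

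The plan begins with a reduction to the case where $A$ and $B$ are commensurable, equivalently --- after an affine change of variables --- $A,B\subseteq\mathbb{Z}$; one expects any configuration failing this to have strictly smaller $Q$, though, since $Q$ is not continuous in the coefficients (a generic perturbation destroys every coincidence and collapses $Q$ to $1/n!$), making this precise requires a stability statement we do not attempt here. Granting it, fix distinct integer sets $A,B$, write $Y=\sum_i a_ib_{\pi(i)}$ and $V=\Var[Y]=\tfrac{1}{n-1}\paren[\big]{\sum_i a_i^2-n\bar a^2}\paren[\big]{\sum_i b_i^2-n\bar b^2}$, and let $h$ be the largest real with $Y\in h\mathbb{Z}+c$ for some $c$; thus $h=d_Ad_B$, where $d_A$ and $d_B$ denote the greatest common divisors of the pairwise differences within $A$ and within $B$ respectively. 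Fourier inversion on the lattice $h\mathbb{Z}+c$ gives
\[Q[Y]\ \le\ h\int_{-1/(2h)}^{1/(2h)}\abs{\sete[e^{2\pi i tY}]}\,dt,\qquad\sete[e^{2\pi i tY}]=\frac{1}{n!}\operatorname{per}(e^{2\pi i ta_jb_k})_{j,k=1}^n,\]
and we split $[-\tfrac{1}{2h},\tfrac{1}{2h}]$ via the circle method into a major arc $\abs{t}\le V^{-1/2}\log n$ and its complement.

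On the major arc we want the sharp estimate $\abs{\sete[e^{2\pi i tY}]}\le(1+o(1))\,e^{-2\pi^2 t^2V}$, which yields $\int_{\mathrm{major}}\abs{\sete[e^{2\pi i tY}]}\,dt\le(1+o(1))\,(2\pi V)^{-1/2}$. On the minor arc we want $\int_{\mathrm{minor}}\abs{\sete[e^{2\pi i tY}]}\,dt=o(V^{-1/2})$; this should come from a Hal\'asz/Weyl-type bound via the transposition trick of \cref{lem:halasz}, under which $\abs{\sete[e^{2\pi i tY}]}$ is at most the expectation over two independent permutations of $\prod_{j=1}^{\floor{n/2}}\abs{\cos(\pi t(a_{\pi(2j-1)}-a_{\pi(2j)})(b_{\pi'(2j-1)}-b_{\pi'(2j)}))}$, a quantity governed by the counts $\kappa_c(A,B)$ of \cref{sec:proofs} and hence, through the estimate of Roche-Newton and Rudnev invoked in \cref{lem:rnr}, so small throughout the minor arc that its integral there is $o(V^{-1/2})$. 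Combining the two arcs yields $Q[Y]\le h(1+o(1))(2\pi V)^{-1/2}$. The argument then closes with an elementary packing bound: for distinct integers $a_1<\cdots<a_n$ one has $a_j-a_i\ge d_A(j-i)$, so $\sum_i a_i^2-n\bar a^2=\tfrac1n\sum_{i<j}(a_j-a_i)^2\ge\tfrac{d_A^2}{n}\sum_{i<j}(j-i)^2=d_A^2\cdot\tfrac{n(n^2-1)}{12}$, and likewise for $B$. Hence $V\ge\tfrac{h^2n^2(n^2-1)^2}{144(n-1)}=\tfrac{h^2n^5}{144}(1+o(1))$, and $Q[Y]\le h(1+o(1))(2\pi V)^{-1/2}\le\paren*{\tfrac{12}{\sqrt{2\pi}}+o(1)}n^{-5/2}$.

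The crux is the major-arc estimate in the ``critical frequency'' range $\abs{t}\asymp V^{-1/2}$, which carries essentially all of the Gaussian mass: there a third-moment (cumulant) expansion controls $\sete[e^{2\pi i tY}]$ only up to a multiplicative $O(1)$ factor, which is not enough to extract the constant $\tfrac{1}{\sqrt{2\pi}}$. What is needed is a local central limit theorem for $\sum_i a_ib_{\pi(i)}$ with multiplicative error $1+o(1)$, \emph{uniform over all pairs of distinct integer sets} --- equivalently, an Edgeworth-type expansion for this permutation statistic whose error bounds do not degrade as the configuration varies. We regard this uniform local limit theorem as the principal obstacle; the minor-arc bound is, by comparison, a quantitative refinement of the machinery of \cref{sec:proofs}, and the reduction to the commensurable case, though delicate, ought to be tractable. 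Finally, we note a seemingly harder alternative route: to transplant the compression argument of S\'ark\"ozy-Szemer\'edi or the unimodality argument of Stanley from the Littlewood-Offord model. But the permutation model does not appear to carry an analogue of the poset or hard-Lefschetz structure those proofs exploit, so we expect the Fourier-analytic approach above to be the more promising one.
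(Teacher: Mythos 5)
This statement is a conjecture; the paper does not prove it, offering only remarks on what a proof would require. Your proposal is appropriately framed as a sketch, and its arithmetic is correct: you rightly verify the conjectured extremal case ($A=B=[n]$ with $\sigma_0^2 = n^2(n-1)(n+1)^2/144$, so a local limit theorem would give $Q = (12/\sqrt{2\pi} + o(1))n^{-5/2}$), and the packing bound $V \ge h^2 n^2(n^2-1)^2/(144(n-1))$ via $a_j - a_i \ge d_A(j-i)$ is correct and closes the loop cleanly once the Fourier estimates are granted. The Esseen/circle-method skeleton --- major arc via a uniform local CLT, minor arc via the transposition trick and Hal\'asz machinery --- is a sensible architecture, and you correctly identify the uniform local CLT (with multiplicative $1+o(1)$ error, uniform over all distinct integer sets) as the crux for the constant, which aligns with the paper's remark that ``completely different ideas'' seem needed there.

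However, your minor-arc plan has a gap that you do not flag but that the paper's concluding remarks explicitly call out. You propose to control the minor-arc integral by the quantities $\kappa_c(A,B)$ through the Roche-Newton--Rudnev bound of \cref{lem:rnr}, concluding it is $o(V^{-1/2})$. But that bound carries an inherent $\log$ factor that, as the paper notes, \emph{cannot be removed for grid examples} --- and $A=B=[n]$, the configuration you expect to be extremal, is precisely a grid. Concretely, $\kappa_{1,-1}([n],[n]) \asymp (\log n)/n^2$ by the multiplication-table phenomenon, so the paper's machinery plateaus at $O(n^{-5/2}\log n)$ there, which is not $o(V^{-1/2})$. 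The paper's suggested route around this --- replacing the RNR estimate with a log-free higher-order Minkowski-energy bound, in the spirit of how Hal\'asz passes from pair correlations to $2r$-fold sums --- is itself open and is the natural first step toward even the weaker $O(n^{-5/2})$ goal. So while your framework is plausible, the minor-arc step is a more serious obstacle than you present: it is not ``a quantitative refinement of the machinery of \cref{sec:proofs}'' but requires a new energy estimate that does not yet exist, on top of the uniform local CLT and the commensurability-stability argument you already flag as gaps.
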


The constant $\frac{12}{\sqrt{2\pi}}$ comes from the case $A=B=[n]$, which we believe should be extremal. Proving \cref{conj:AP-extremal} would require new input beyond our current approach. For example, one of the reasons behind the $n^{o(1)}$ loss in \cref{thm:main} is the Minkowski distance energy bound of Roche-Newton and Rudnev. This theorem itself contains a logarithmic loss which cannot be removed due to grid examples (an observation which is tied to the classical multiplication-table phenomenon). That being said, the conditioning trick from the proof of \cref{lem:rnr} is flexible enough to allow the replacement of \cite[Prop.~4]{RNR15} with appropriate higher order energy estimates that may not require logarithmic losses. It would already be interesting to find such an estimate and to use it to prove an $O(n^{-5/2})$ bound for \cref{conj:AP-extremal}. To get the precise leading constant $\frac{12}{\sqrt{2\pi}}$, however, we suspect that completely different ideas would be required.

In a different direction, we also note that unlike Hal\'asz's theorem (\cref{thm:halasz}), which can prove arbitrarily strong bounds by considering arbitrarily large $r$, \cref{thm:main,thm:MAMB} cannot prove a bound stronger than $n^{-5/2+o(1)}$. It would be interesting to see if there exist strengthenings of \cref{thm:main,thm:MAMB} that prove arbitrarily strong bounds as the additive structure of $A$ and $B$ decrease. After all, for generic $A$ and $B$, the map $\pi \mapsto \sum_{i} a_i b_{\pi(i)}$ is injective and thus 
\[Q\brac*{\sum_{i=1}^{n} a_i b_{\pi(i)}} = \frac{1}{n!}.\]
This shows that, in principle, permutation sums can exhibit even factorial anticoncentration once the sets of coefficients are sufficiently additively independent. 

Such a result would have to look beyond multiplicities for quantifying additive structure, but it is unclear to us what the correct notion would be.

\printbibliography
\end{document}